\documentclass[11pt,reqno]{amsart}
\setlength{\textheight}{23cm}
\setlength{\textwidth}{16cm}
\setlength{\topmargin}{-0.8cm}
\setlength{\parskip}{0.3\baselineskip}
\hoffset=-1.4cm

\usepackage{diagrams}
\usepackage{array}
\usepackage{amssymb, mathtools}
\usepackage[pagewise]{lineno}
\numberwithin{equation}{section}
\usepackage{cite}
\usepackage{url}
\usepackage{graphicx}
\usepackage[all]{xy}
\usepackage{listings}
\makeatletter
\renewcommand*\env@matrix[1][*\c@MaxMatrixCols c]{%
	\hskip -\arraycolsep
	\let\@ifnextchar\new@ifnextchar
	\array{#1}}
\makeatother

\theoremstyle{definition}
\newtheorem{thm}{Theorem}[section]

\newtheorem{exa}[thm]{Example}
\newtheorem{prop}[thm]{Proposition}

\newtheorem{rem}[thm]{Remark}

\DeclareMathOperator{\mo}{\mathcal{O}}

\newcommand{\mr}[1]{\mathrm{#1}}
\newcommand{\mb}[1]{\mathbb{#1}}

\newcommand{\mc}[1]{\mathcal{#1}}



\begin{document}
	
	\title[Lefschetz theorem]{Failure of Lefschetz hyperplane theorem}
	
	\author{Ananyo Dan}

	\address{School of Mathematics and Statistics, University of Sheffield, Hicks building, Hounsfield Road, S3 7RH, UK}
	
	\email{a.dan@sheffield.ac.uk}
	
	\thanks{}
	
	\subjclass[2020]{14C30, 32S35, 32S50}
	
	\keywords{Hodge theory, Lefschetz hyperplane theorem, quasi-projective varieties, factoriality, Grothendieck-Lefschetz theorem, Picard group}
	
	\date{\today}
	
	\begin{abstract}
	 In this article, we give a counterexample to the Lefschetz hyperplane theorem for
	 non-singular quasi-projective varieties. 
	 A classical result of Hamm-L\^{e} shows that Lefschetz hyperplane theorem can hold for hyperplanes in general position.
	 We observe that the condition of ``hyperplane'' is strict in the sense that it is not possible to replace it 
	 by higher degree hypersurfaces.
	 The counterexample is very simple: projective space minus finitely many points. Moreover, as an intermediate step 
	 we prove that the Grothendieck-Lefschetz theorem also fails in the quasi-projective case.
	\end{abstract}

	\maketitle
	
	\section{Introduction}
	The underlying field will always be $\mb{C}$. 
	Consider a non-singular, projective variety $Y$ of dimension $n$.
  The \emph{Lefschetz hyperplane theorem} (LHT) states that for any hypersurface $X \subset Y$ with $\mo_X(Y)$
  very ample, the restriction morphism
  \begin{equation}\label{eq:lef}
   H^k(Y,\mb{Z}) \to H^k(X, \mb{Z})\, \mbox{ is an isomorphism for all } k<n-1\, \mbox{ and injective for }k=n-1.
  \end{equation}
    If $Y$ is the projective space, then the theorem extends further. In particular, the restriction from 
    $H^{n-1}(\mb{P}^n)$ to $H^{n-1}(X)$ is an isomorphism for a very general hypersurface $X$.
     The geometry of the locus of hypersurfaces where this isomorphism fails (also known as the Noether-Lefschetz locus),
     has been extensively studied \cite{voilieu, v2, ca1, M5, D3, Dcont}.
     It is therefore evident that the failure of the Lefschetz hyperplane theorem  can give rise to important questions in 
     Hodge theory and deformation theory. The goal of this article is to investigate the failure of this theorem in the 
     quasi-projective case.
     
     It was observed by Hamm and L\^{e} \cite{hamm, hamm2} that if a hyperplane section $X$
     in a quasi-projective variety $Y$ is in ``general'' position, then \eqref{eq:lef} holds true. 
     The criterion for general position, is given explicitly in terms of a Whitney stratification of $Y$ (see \S \ref{sec:hl}). 
     This leads to the natural question:
     
     {\bf{Question:}} Is the Hamm-L\^{e} theorem (Theorem \ref{thm:hamm}) true if we replace ``hyperplane'' by 
	\emph{higher degree hypersurface}?
   
   This is true in the case when $Y$ is a projective, non-singular variety.
   Surprisingly, this can fail even if $Y$ is the complement of a single point in a projective space.
   In particular, we give an example of a higher degree hypersurface which satisfies all the conditions in 
   the Hamm-L\^{e} theorem  except for being a hyperplane. Yet, in this case LHT fails.
   We now discuss this in details. 
  Recall, a projective variety $X$ is called \emph{non-factorial} if the 
  rank of the divisor class group $\mr{Div}(X)$ (i.e., the free abelian group of divisors on $X$ 
  modulo linear equivalence) is not the same as the rank of the Picard group $\mr{Pic}(X)$. We prove:
  
  \begin{thm}\label{thm1}
	Let $X \subset \mb{P}^n$ be a non-factorial hypersurface with isolated singularities  with $n \ge 4$. Denote by $X_{\mr{sing}}$ the 
	singular locus of $X$. Then, the natural restriction morphism 
	\[H^2(\mb{P}^n \backslash X_{\mr{sing}}, \mb{Z}) \to H^2(X \backslash X_{\mr{sing}}, \mb{Z})\]
	is not surjective.
	\end{thm}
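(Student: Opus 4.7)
The plan is a rank comparison via cycle class maps. Write $X^\circ := X \setminus X_{\mr{sing}}$ and consider the commutative square
\[
\begin{array}{ccc}
\mr{Pic}(\mb{P}^n \setminus X_{\mr{sing}}) & \longrightarrow & \mr{Pic}(X^\circ) \\
\downarrow & & \downarrow \\
H^2(\mb{P}^n \setminus X_{\mr{sing}}, \mb{Z}) & \longrightarrow & H^2(X^\circ, \mb{Z}),
\end{array}
\]
with vertical arrows the first Chern class maps. I aim to show the image of the bottom arrow has rank at most $1$, while its target contains a subgroup of rank at least $2$.

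For the source, $X_{\mr{sing}}$ is a finite set of points in $\mb{P}^n$ of real codimension $2n \ge 8$, so the long exact sequence of cohomology with supports gives $H^2(\mb{P}^n \setminus X_{\mr{sing}}, \mb{Z}) \cong H^2(\mb{P}^n, \mb{Z}) \cong \mb{Z}$, generated by the hyperplane class. The left vertical arrow is the standard Chern class isomorphism $\mr{Pic}(\mb{P}^n \setminus X_{\mr{sing}}) \cong \mb{Z}$, so the image of the bottom row is the cyclic subgroup generated by $c_1(\mo_X(1)|_{X^\circ})$.

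For the target I combine two ingredients. First, $X$ has dimension $n - 1 \ge 3$, is Cohen--Macaulay as a hypersurface, and is regular in codimension one (since $X_{\mr{sing}}$ has codimension $\ge 3$ in $X$), hence normal by Serre's criterion. Since $X^\circ$ is the smooth locus with complement of codimension $\ge 2$ in $X$, one has $\mr{Pic}(X^\circ) \cong \mr{Cl}(X^\circ) = \mr{Cl}(X)$, which agrees with $\mr{Div}(X)$ modulo linear equivalence. Non-factoriality, together with $\mr{rk}\,\mr{Pic}(X) \ge 1$ (the ample hyperplane class), then forces $\mr{rk}\,\mr{Pic}(X^\circ) \ge 2$. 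Second, the exponential sheaf sequence on $X^\circ$ identifies the kernel of the right-hand $c_1$ with a quotient of $H^1(X^\circ, \mo_{X^\circ})$; the vanishing of this group will make $c_1$ injective.

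This vanishing is the main technical step. Cohen--Macaulayness of $X$ gives $\mr{depth}_p \mo_{X,p} \ge n - 1 \ge 3$ at each $p \in X_{\mr{sing}}$, whence $H^i_{X_{\mr{sing}}}(X, \mo_X) = 0$ for $i \le 2$. Feeding this into the local cohomology sequence yields $H^1(X^\circ, \mo_{X^\circ}) \cong H^1(X, \mo_X)$, and the latter vanishes via the ideal sheaf sequence $0 \to \mo_{\mb{P}^n}(-d) \to \mo_{\mb{P}^n} \to \mo_X \to 0$ together with $H^1(\mb{P}^n, \mo_{\mb{P}^n}) = H^2(\mb{P}^n, \mo_{\mb{P}^n}(-d)) = 0$ for $n \ge 3$. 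Hence $\mr{Pic}(X^\circ) \hookrightarrow H^2(X^\circ, \mb{Z})$ has image of rank $\ge 2$, exceeding the rank $1$ image coming from $\mb{P}^n \setminus X_{\mr{sing}}$; any Weil divisor class on $X$ not a rational multiple of the hyperplane then produces a cohomology class outside the image, establishing non-surjectivity.
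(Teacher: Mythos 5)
Your proof is correct, and while it rests on the same two pillars as the paper's argument, it assembles them along a genuinely different and somewhat leaner route. The shared core is exactly your two main ingredients: the vanishing $H^1(\mo_{X_{\mr{sm}}})=0$, which you obtain just as the paper does (depth $\ge 3$ at the isolated singular points kills $H^i_{X_{\mr{sing}}}(\mo_X)$ for $i\le 2$, and $H^1(\mo_X)=0$ from the ideal sheaf sequence), making the first Chern class map on $\mr{Pic}(X_{\mr{sm}})$ injective; and the identification $\mr{Pic}(X_{\mr{sm}})\cong \mr{Cl}(X)$ via normality and codimension $\ge 2$, whose rank is at least $2$ by non-factoriality. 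The difference is in how the $\mb{P}^n$ side is handled and how the contradiction is extracted. The paper argues by contradiction: it proves in Proposition \ref{prop:van} that $H^1(\mo_Y)=H^2(\mo_Y)=0$ for $Y=\mb{P}^n\backslash X_{\mr{sing}}$ (a second local cohomology computation, this time on $\mb{P}^n$) so that $\mr{Pic}(Y)\cong H^2(Y,\mb{Z})$, runs a diagram chase to transfer the assumed surjectivity on $H^2$ to a surjection $\mr{Pic}(Y)\twoheadrightarrow\mr{Pic}(X_{\mr{sm}})$, and invokes the Lefschetz hyperplane theorem to compute $\mr{Pic}(X)\cong\mb{Z}$ before contradicting non-factoriality. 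You instead compute $H^2(\mb{P}^n\backslash X_{\mr{sing}},\mb{Z})\cong\mb{Z}$ directly and topologically (removing points of real codimension $2n\ge 8$ does not affect $H^2$ or $H^3$ with supports), and finish with a bare rank count, needing only $\mr{rk}\,\mr{Pic}(X)\ge 1$ from ampleness of $\mo_X(1)$. This buys you a shorter proof that dispenses with half of Proposition \ref{prop:van} and with the LHT computation of $\mr{Pic}(X)$; what the paper's longer route buys is Remark \ref{rem1}, the failure of the Grothendieck--Lefschetz theorem on Picard groups, which falls out of its diagram chase. One shared caveat, not a defect specific to your write-up: the exponential sequence lives in the analytic topology, so both arguments implicitly treat the algebraic and analytic Picard groups of the quasi-projective variety $X_{\mr{sm}}$ interchangeably (equivalently, both need the cycle class map $\mr{Cl}(X)\to H^2(X_{\mr{sm}},\mb{Z})$ to lose no rank); your treatment of this point is at the same level of rigour as the paper's.
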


	Using this theorem we now give an explicit example.
	
	\begin{exa}\label{exa01}
	Let $X \subset \mb{P}^4$ be a hypersurface defined by the equation $X_0^2+X_1^2+X_2^2+X_3^2$, 
	where $X_0,...,X_4$ are the coordinates on $\mb{P}^4$. Clearly, $X$ has exactly one singular point
	$x = [0:0:0:0:1]$. The divisor class group $\mr{Div}(X)$ is isomorphic to $\mb{Z} \oplus \mb{Z}$
	(see \cite[Ex. II.$6.5$]{R1}). By Lefschetz hyperplane theorem, we have $H^2(X,\mb{Z}) \cong \mb{Z}$.
	Using the exponential exact sequence, one can check that $\mr{Pic}(X) \cong \mb{Z}$. Hence, $X$ is 
	non-factorial. Theorem \ref{thm1} then implies that the restriction morphism from 
	$H^2(\mb{P}^4 \backslash \{x\}, \mb{Z})$ to $H^2(X \backslash \{x\}, \mb{Z})$ is not surjective.	
	\end{exa}
	
	As an intermediate step we show that the Grothendieck-Lefschetz theorem \cite{sga2}
	fails in the quasi-projective case (see Remark \ref{rem1}).

	\emph{Acknowledgement}:
	I thank Dr. I. Kaur for discussions.	The author was funded by EPSRC grant number EP/T019379/1. 
	
	\section{On the Hamm-L\^{e} result}
	In \cite{hamm}, Hamm and L\^{e}
	proved a version of the Lefschetz hyperplane theorem for quasi-projective varieties (see Theorem \ref{thm:hamm}
	below). The proof follows in two stages. We use notations as in \S \ref{sec:se} below.
	The first step is to check that for all $i \le \dim(Y)-2$, 
	$H^i(Y\backslash Z)$ (resp. $H^{m-1}(Y\backslash Z)$) is isomorphic to (resp. contained in) 
	the $i$-th (resp. $(m-1)$-th) cohomology of $V_r(L) \cap (Y \backslash Z)$,
	for some neighbourhood $V_r(L)$ of $L$ of ``radius'' $r$, for almost all $r>0$ (see \cite[Theorem $1.1.1$]{hamm}).
	The second step is to check whether $L \cap (Y \backslash Z)$ is a deformation retract of
	$V_r(L) \cap (Y \backslash Z)$. One observes that this holds true if $L$ is in a ``general'' position.
	An explicit description of the general position will be mentioned in Theorem \ref{thm:hamm} below.

     \subsection{Setup}\label{sec:se}
     Let $Y$ be a projective subvariety of dimension $m$ in $\mb{P}^n$, $Z \subset Y$ be an algebraic subspace and 
     $L \subset \mb{P}^n$ a hyperplane in $\mb{P}^n$ such that $Y \backslash (Z \cup L)$ is non-singular.
     Consider a stratification $\{Y_i\}_{i \in I}$ of $Y$ satisfying the following conditions:
     
     \begin{enumerate}
      \item each $Y_i$ is a real semi-algebraic subset of $Y$,
      \item $\{Y_i\}$ is a Whitney stratification,
      \item $Z$ is a union of some of the strata,
      \item the stratification satisfies the Thom condition for the following function:
      \[\tau:Y \to \mb{R},\, \mbox{ sending } y \in Y\, \mbox{ to } \frac{\sum\limits_{i=1}^k |f_i(y)|^{2d/d_i}}{\sum\limits_{i=0}^n |y_i|^{2d}},\, \mbox{ where } y=(y_1,...,y_n),\]
      $Z$ is defined by the homogeneous polynomials $f_1,...,f_k$ of degrees $d_i$, respectively and $d$ is the l.c.m. 
      of the       $d_i$'s. See \cite[\S $1.4.4$]{le1} for the precise definition. 
           \end{enumerate}

     \subsection{On the Hamm-L\^{e} result}\label{sec:hl}
     
     Let $\Omega$ be the set of complex projective hyperplanes of $\mb{P}^n$ transverse to all the strata $Y_i$.
     
     \begin{thm}{(Hamm-L\^{e} \cite[Theorem $1.1.3$]{hamm})}\label{thm:hamm}
      Assume that $Y \backslash Z$ is non-singular. Then, for any $L \in \Omega$ we have 
      \[H^k(Y \backslash Z, L \cap (Y \backslash Z))=0\, \mbox{ for all } k \le m-1.\]
      In other words, the natural morphism from $H^k(Y \backslash Z, \mb{Z})$ to $H^k(L \cap (Y \backslash Z), \mb{Z})$ 
      is an isomorphism
      for all $k \le m-2$ and injective for $k=m-1$.
     \end{thm}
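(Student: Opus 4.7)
The plan is to follow the two-stage approach indicated in the prelude to this section and attributed to Hamm--L\^{e}. \textbf{Step 1.} I would first establish the vanishing
\[H^k(Y \backslash Z,\ V_r(L) \cap (Y \backslash Z)) = 0 \quad \text{for all } k \le m-1,\]
where $V_r(L)$ is a small Fubini--Study $r$-neighbourhood of $L$ in $\mb{P}^n$, taken for almost every sufficiently small $r > 0$. The natural tool is stratified Morse theory applied to the distance-squared function $\rho(y) = d(y,L)^2$ on $Y \backslash Z$. For generic $r > 0$ the real-analytic boundary $\partial V_r(L)$ meets each stratum $Y_i$ transversely, so the pair $(Y \backslash Z,\ V_r(L) \cap (Y \backslash Z))$ admits a stratified handle decomposition whose handles have index $\ge m$; this is the content of \cite[Theorem 1.1.1]{hamm}. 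The Thom condition imposed on $\tau$ in \S\ref{sec:se}(4) is what prevents the critical locus of $\rho$ on the open strata from accumulating onto $Z$ and permits the Morse-theoretic gradient flow to be integrated in a controlled way across $Z$.

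\textbf{Step 2.} Next I would show that the inclusion
\[L \cap (Y \backslash Z) \hookrightarrow V_r(L) \cap (Y \backslash Z)\]
is a strong deformation retract for $r$ small. This is where the assumption $L \in \Omega$ enters decisively. Transversality of $L$ to every Whitney stratum $Y_i$ places us in the setting of Thom's first isotopy lemma: a tubular neighbourhood of $L$ inside $\mb{P}^n$ may be chosen compatibly with the stratification of $Y$, and the radial contraction onto $L$ lifts to a stratum-preserving isotopy of $V_r(L)$. Since $Z$ is a union of strata, this isotopy restricts to an ambient isotopy of $Y \backslash Z$ and provides the required retraction. Combining Step 1 with the homotopy equivalence of Step 2 yields $H^k(Y \backslash Z,\ L \cap (Y \backslash Z)) = 0$ for all $k \le m-1$, and the long exact sequence of the pair then gives the isomorphism in degrees $k \le m-2$ and injectivity in degree $k = m-1$ as claimed.

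The main obstacle I anticipate is Step 1. Step 2 is essentially a clean application of Thom's isotopy lemma once $L$ is transverse to the stratification. Step 1, by contrast, requires delicate control of the critical values of $\rho$ on strata that approach $Z$, together with a verification that the distance function $\rho$ behaves well with respect to the Thom $a_f$-condition for $\tau$; the very existence of a Whitney stratification satisfying the Thom condition for $\tau$ (condition (4) of \S\ref{sec:se}) is a nontrivial input drawn from \cite{le1}, and without it the Morse-theoretic argument cannot be closed up near the \emph{boundary} $Z$.
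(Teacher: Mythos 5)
Your proposal follows exactly the two-stage strategy that the paper itself sketches in the preamble to this section (comparison of $H^k(Y\backslash Z)$ with the cohomology of $V_r(L)\cap(Y\backslash Z)$ via \cite[Theorem 1.1.1]{hamm}, then a deformation retraction onto $L\cap(Y\backslash Z)$ using transversality of $L$ to the strata), which is all the paper offers since it defers the full argument to Hamm--L\^{e}. Your elaboration via stratified Morse theory and Thom's isotopy lemma is consistent with that outline, so this is essentially the same approach.
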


	We now write the stratification relevant to Example \ref{exa01}.
	
	\begin{rem}
	 Take $Y=\mb{P}^4 \subset \mb{P}^5$ defined by $z_5=0$, where $z_i$ are 
	 the coordinates on $\mb{P}^5$. Take $Z:=[0,0,0,0,1,0]$ the closed point in $Y$. 
	 Take the stratification of $Y$ consisting of 
	 \[(Y \backslash Z) \coprod Z.\]
	 Then, the equations defining $Z$ in $\mb{P}^5$ are given by $f_i:= z_i$ for $0 \le i \le 3$ and $f_5:= z_5$.
	 The function $\tau$ is simply
	 \[\tau:= \frac{|z_5|^2+\sum\limits_{i=0}^3 |z_i|^2}{\sum\limits_{i=0}^5 |z_i|^2}.\]
	 Note that this stratification satisfies conditions $(1)$-$(4)$ in \S \ref{sec:se} above, with the stratification on 
	 $\mb{R}$ given by $\mb{R} \backslash \{0\} \coprod \{0\}$. Finally, note that the hypersurface $X$ in $\mb{P}^5$ defined 
	 by $z_0^2+z_1^2+z_2^2+z_3^2+z_5^2$ is singular at the point $Z$. As a result $X$ is transverse to all the strata of $Y$.
	 We will observe in Theorem \ref{thm1} that if we replace $L$ in Theorem \ref{thm:hamm} above by $X$, then the conclusion fails.
	\end{rem}

	\section{Proof of Main theorem}
	We will assume that the reader has basic familiarity with local cohomology.
	See \cite{grh} for basic definitions and results in this topic.
	
	Let $X \subset \mb{P}^n$ be a non-factorial hypersurface with isolated singularities  with $n \ge 4$. Denote by $X_{\mr{sing}}$ the 
	singular locus of $X$, $Y:= \mb{P}^n  \backslash X_{\mr{sing}}$ and $X_{\mr{sm}}:=
	X  \backslash X_{\mr{sing}}$. We first show:
	
	\begin{prop}\label{prop:van}
	 The cohomology groups $H^1(\mo_Y), H^2(\mo_Y)$ and $H^1(\mo_{X_{\mr{sm}}})$ all vanish, in both
	analytic  as well as Zariski topology.
	\end{prop}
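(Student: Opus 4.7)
The plan is to exploit that $X_{\mr{sing}}$ consists of finitely many points, so it has codimension $n \ge 4$ in $\mb{P}^n$ and codimension $n-1 \ge 3$ in $X$. Writing $W := X_{\mr{sing}}$, I would then use the long exact sequence of local cohomology with support in $W$ to compare the coherent cohomology of $Y$ (respectively $X_{\mr{sm}}$) with that of $\mb{P}^n$ (respectively $X$), and finally appeal to standard computations on the ambient projective variety. Concretely, the excision sequence
\[ H^i_W(\mb{P}^n,\mo_{\mb{P}^n}) \to H^i(\mb{P}^n,\mo_{\mb{P}^n}) \to H^i(Y,\mo_Y) \to H^{i+1}_W(\mb{P}^n,\mo_{\mb{P}^n}) \]
combined with Grothendieck's depth theorem---which, since $\mb{P}^n$ is smooth and hence Cohen--Macaulay, yields $H^j_W(\mb{P}^n,\mo_{\mb{P}^n}) = 0$ for all $j < n$---kills both end terms for $i \in \{1,2\}$ (as $n\ge 4$), so $H^i(Y,\mo_Y) \cong H^i(\mb{P}^n,\mo_{\mb{P}^n}) = 0$.

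For the third vanishing I would repeat the same argument on $X$ in place of $\mb{P}^n$. Being a hypersurface, $X$ is a local complete intersection and therefore Cohen--Macaulay of (local) dimension $n-1$, which gives $H^j_W(X,\mo_X) = 0$ for $j < n-1$; in particular for $j = 1,2$. This produces an isomorphism $H^1(X,\mo_X) \cong H^1(X_{\mr{sm}},\mo_{X_{\mr{sm}}})$, and the left-hand side vanishes from the ideal-sheaf sequence
\[ 0 \to \mo_{\mb{P}^n}(-d) \to \mo_{\mb{P}^n} \to \mo_X \to 0 \]
together with the classical vanishings $H^1(\mb{P}^n,\mo_{\mb{P}^n}) = 0$ and $H^2(\mb{P}^n,\mo_{\mb{P}^n}(-d)) = 0$, which hold since $n \ge 3$.

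The only point I would treat with some care is the distinction between the Zariski and analytic topologies. Nothing in the argument is truly sensitive to the choice of topology: the local cohomology sequence, the depth--cohomology correspondence, and the Cohen--Macaulay property of $\mo_{\mb{P}^n}$ and $\mo_X$ all admit analytic counterparts for coherent analytic sheaves, so the same proof runs verbatim in each setting. Alternatively, one can invoke GAGA on $\mb{P}^n$ and on $X$ to identify the algebraic and analytic coherent cohomologies of these projective varieties, and observe that the local cohomology sheaves with support in a finite set are computed on stalks where the two topologies agree. I do not anticipate a serious obstacle here; the main thing to verify is that the analytic version of Grothendieck's vanishing is available in the exact generality we need, which is standard but worth citing explicitly.
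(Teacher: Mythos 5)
Your proposal is correct and follows essentially the same route as the paper: the local cohomology long exact sequence comparing $Y$ with $\mb{P}^n$ (resp.\ $X_{\mr{sm}}$ with $X$), the vanishing of $H^j_W$ in low degrees coming from the Cohen--Macaulay/depth bound at the finitely many singular points, and GAGA to handle both topologies. The only cosmetic difference is that you quote the global vanishing $H^j_W=0$ for $j<\mathrm{depth}$ directly, whereas the paper derives it from the vanishing of the local cohomology sheaves $\mc{H}^q_W$ via the local-to-global spectral sequence.
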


	\begin{proof}
	 Recall, the long exact sequence for local cohomology groups, which exists in both topologies (see \cite[Corollary $1.9$]{grh}):
	 \[ ... \to H^1(\mo_{\mb{P}^n}) \to H^1(\mo_Y) \to H^2_{X_{\mr{sing}}}(\mo_{\mb{P}^n}) \to 
	 H^2(\mo_{\mb{P}^n}) \to H^2(\mo_Y) \to H^3_{X_{\mr{sing}}}(\mo_{\mb{P}^n}) \to ... 	 \]
Recall, $H^1(\mo_{\mb{P}^n})=0=H^2(\mo_{\mb{P}^n})$. By Serre's GAGA, $H^1(\mo_{\mb{P}^n}^{^{{\mr{an}}}})=0=H^2(\mo_{\mb{P}^n}^{^{\mr{an}}})$.
To prove the vanishing of 
 $H^1(\mo_Y)$ and $H^2(\mo_Y)$, we simply need to prove the vanishing of 
 $H^i_{X_{\mr{sing}}}(\mo_{\mb{P}^n})$ for $i=2,3$ in both topologies.
 
 Consider the spectral sequence (see \cite[Proposition $1.4$]{grh}):
 \begin{equation}\label{eq:spe}
  E_2^{p,q}=H^p(\mb{P}^n, \mc{H}^q_{X_{\mr{sing}}}(\mo_{\mb{P}^n})) \Rightarrow H^{p+q}_{X_{\mr{sing}}}(\mo_{\mb{P}^n}).
 \end{equation}
 We are interested in the cases when $p+q$ equals $2$ or $3$. Since $n \ge 4$ and $X_{\mr{sing}}$ are closed points, we have (see \cite[Proposition $1.2$]{yoshi})
 \[\mc{H}^q_{X_{\mr{sing}}}(\mo_{\mb{P}^n})=0 \mbox{ for } q \le 3.\]
 This implies that $E_2^{p,q}=0$ for $p+q$ equals $2$ or $3$. Hence the spectral sequence degenerates at $E_2$ in this case and 
 $H^i_{X_{\mr{sing}}}(\mo_{\mb{P}^n})=0$ in both topologies. This proves the vanishing of $H^1(\mo_Y)$ and $H^2(\mo_Y)$.
 
 The proof for the vanishing of $H^1(\mo_{X_{\mr{sm}}})$ follows similarly. In particular, using \cite[Corollary $1.9$]{grh},
 it suffices to check the vanishing of $H^1(\mo_X)$ and $H^2_{X_{\mr{sing}}}(\mo_X)$. Since $X$ is a hypersurface in 
 $\mb{P}^n$ and $n \ge 4$, $H^1(\mo_X)=0$. By Serre's GAGA, $H^1(\mo_X^{^{\mr{an}}})=0$. To prove the vanishing of 
 $H^2_{\mr{sing}}(\mo_X)$ use the spectral sequence \eqref{eq:spe} above after replacing $\mb{P}^n$ by $X$ and $p+q=2$. 
 Since $\dim X \ge 3$, \cite[Proposition $1.2$]{yoshi} implies that $\mc{H}^q_{X_{\mr{sing}}}(\mo_X)=0$ for $q \le 2$.
 This implies that the spectral sequence degenerates at $E_2$ and $H^2_{X_{\mr{sing}}}(\mo_X)=0$ in both topologies. 
 Hence, $H^1(\mo_{X_{\mr{sm}}})=0$ in both topologies. This proves the proposition.
	\end{proof}

	\begin{proof}[Proof of the main theorem]
	We prove the theorem by contradiction. Suppose that the restriction morphism from $H^2(Y,\mb{Z})$ to $H^2(X, \mb{Z})$ is surjective.
	Comparing the long exact sequences associated to the exponential exact sequence for $Y$ and $X_{\mr{sm}}$ we get the following 
	diagram where the horizontal rows are exact:
	\begin{equation}\label{eq:diag}
	 \begin{diagram}
	   H^1(\mo_Y)&\rTo&H^1(\mo_Y^*) &\rTo^{\partial_1}&H^2(Y,\mb{Z})&\rTo&H^2(\mo_Y)\\
	   \dTo&\circlearrowleft& \dTo^{\rho'}&\circlearrowleft& \dTo^{\rho}&\circlearrowleft&\dTo\\
	   H^1(\mo_{X_{\mr{sm}}})&\rTo&H^1(\mo_{X_{\mr{sm}}}^*) &\rTo^{\partial_2}&H^2(X_{\mr{sm}},\mb{Z})&\rTo&H^2(\mo_{X_{\mr{sm}}})	   
	  \end{diagram}
	\end{equation}
Using the vanishing results from Proposition \ref{prop:van}, we conclude that $\partial_1$ is an isomorphism and $\partial_2$ is injective.
By assumption, $\rho$ is surjective. We claim that $\rho'$ is surjective. Indeed, given $\alpha \in H^1(\mo_{X_{\mr{sm}}}^*)$, 
the surjectivity of $\rho$ implies that there exists $\beta \in H^2(Y,\mb{Z})$ such that $\rho(\beta)=\partial_2(\alpha)$. 
Since $\partial_1$ is an isomorphism, there exist $\alpha' \in H^1(\mo_Y^*)$ mapping to $\beta$ via $\partial_1$. 
Using the injectivity of $\partial_2$ and the commutativity of the middle square, we have $\rho'(\alpha')=\alpha$. This proves the claim.

	 Since $\rho'$ is surjective, we have the following surjective  morphism:
	 \begin{equation}\label{eq:surj}
	  \mb{Z}=\mr{Pic}(\mb{P}^n) \cong \mr{Pic}(Y) \stackrel{\rho'}{\twoheadrightarrow} \mr{Pic}(X_{\mr{sm}}) \cong \mr{Div}(X)
	 \end{equation}
 where the second and the last isomorphisms follow from the fact that $X_{\mr{sing}}$ is of codimensional at least $2$ in $X$ and $\mb{P}^n$. 
 By Lefschetz hyperplane theorem, we have $H^2(X,\mb{Z}) \cong H^2(\mb{P}^n, \mb{Z}) = \mb{Z}$, generated by the class of the hyperplane section.
  Note that, $H^1(\mo_X)$ and $H^2(\mo_X)$ vanish (use \cite[Ex. III.$5.5$]{R1}
 and $n \ge 4$). Using the exponential short exact sequence for $X$, we conclude that $\mr{Pic}(X) \cong \mb{Z}$. 
 Combining with \eqref{eq:surj}, this implies $\mr{rk}\, \mr{Div}(X) = \mr{rk}\, \mr{Pic}(X)$. But this contradicts the fact that $X$ is non-factorial. 
 Hence, the restriction morphism from $H^2(Y,\mb{Z})$ to $H^2(X, \mb{Z})$ cannot be surjective. This proves the theorem.
	\end{proof}
	
	\begin{rem}\label{rem1}
	 Let $X$ be as in Theorem \ref{thm1}. Then, the restriction morphism 
	 \[\mr{Pic}(\mb{P}^n \backslash X_{\mr{sing}}) \to \mr{Pic}(X \backslash X_{\mr{sing}})\]
	 is not surjective. Indeed, 
	 \[\mr{Pic}(\mb{P}^n \backslash X_{\mr{sing}}) \cong \mr{Pic}(\mb{P}^n) \cong \mb{Z} \mbox{ and }
	  \mr{Pic}(X \backslash X_{\mr{sing}}) \cong \mr{Div}(X).\]
      By Lefschetz hyperplane theorem for projective hypersurfaces, we have $\mr{Pic}(X) \cong \mb{Z}$.
      Since $X$ is non-factorial, the rank of $\mr{Div}(X)$ is not the same as that of $\mr{Pic}(X)$.
      Therefore, $\mr{Pic}(\mb{P}^n \backslash X_{\mr{sing}})$ cannot be isomorphic to 
      $\mr{Pic}(X \backslash X_{\mr{sing}})$.
	\end{rem}

\end{document}